\documentclass{amsart}
\usepackage{amsmath}
\usepackage{amssymb}
\usepackage{amsthm}
\usepackage{mathtools}
\usepackage{empheq}
\usepackage{braket}
\usepackage{xcolor}
\usepackage[colorlinks=true, allcolors=blue]{hyperref}

\usepackage[margin=1.5in]{geometry}

\theoremstyle{plain}
\newtheorem{thm}{Theorem}[section]

\newtheorem{lem}[thm]{Lemma}
\newtheorem{conj}[thm]{Conjecture}

\theoremstyle{definition}
\newtheorem{defn}[thm]{Definition}

\newtheorem{rem}[thm]{Remark}
\numberwithin{equation}{section}

\newcommand{\la}{\lambda}
\newcommand{\ov}[1]{\overline{#1}}
\newcommand{\ZZ}{\mathbb{Z}}

\newcommand{\RR}{\mathbb{R}}
\newcommand{\CC}{\mathbb{C}}
\newcommand{\GL}{{\rm GL}}
\newcommand{\SL}{{\rm SL}}
\newcommand{\SO}{{\rm SO}}
\newcommand{\Sp}{{\rm Sp}}

\DeclareMathOperator{\cone}{cone}

\title{Cones and ping-pong in three dimensions}
\author{Gabriel Frieden, F\'elix G\'elinas, and \'Etienne Soucy}

\address{Universit\'e du Qu\'ebec \`a Montr\'eal, Montr\'eal, QC, Canada}
\email{gabriel.frieden@lacim.ca}
\email{gelinas.felix@courrier.uqam.ca}
\email{soucy.etienne@courrier.uqam.ca}

\begin{document}

\maketitle

\begin{abstract}
We study the hypergeometric group in $\GL_3(\CC)$ with parameters $\alpha = (\frac{1}{4}, \frac{1}{2}, \frac{3}{4})$ and $\beta = (0,0,0)$. We give a new proof that this group is isomorphic to the free product $\ZZ/4\ZZ * \ZZ/2\ZZ$ by exhibiting a ping-pong table. Our table is determined by a simplicial cone in $\RR^3$, and we prove that this is the unique simplicial cone (up to sign) for which our construction produces a valid ping-pong table.
\end{abstract}


\section{Introduction}
\label{sec:intro}

Beukers and Heckman \cite{BH} defined a \emph{hypergeometric group} to be a subgroup of $\GL_n(\CC)$ generated by three matrices $R,T,U$, such that $U^{-1}TR = I$, $U$ and $R$ have no shared eigenvalues, and $T - I$ is a rank one matrix. The name is due to the fact that these groups arise as monodromy groups of hypergeometric differential equations.

One of the main results of \cite{BH} says that the Zariski closure of a hypergeometric group is either a finite subgroup of $\GL_n(\CC)$, or one of the matrix groups $\SL_n(\CC), \SO_n(\CC), \Sp_n(\CC)$. If $H$ is a subgroup of $\GL_n(\ZZ)$ whose Zariski closure is $G(\CC)$ (where $G$ is a matrix group $\GL_n, \SL_n, \SO_n, \Sp_n$, etc.), $H$ is said to be \emph{arithmetic} if it has finite index in $G(\ZZ)$, and \emph{thin} otherwise. Arithmetic subgroups of $\GL_n(\ZZ)$ have long been a central object of study in number theory, but in recent years there has been increasing interest in thin subgroups (see \cite{Sarnak}).

The question of whether a given hypergeometric group is arithmetic or thin has been studied in \cite{CYY, V, SV, FMS, BravThomas, FF}, and is rather subtle. Fuchs, Meiri, and Sarnak showed that several infinite families of hypergeometric groups with closure $\SO_n(\CC)$ ($n$ odd) are thin \cite{FMS}. On the other hand, for hypergeometric groups with closure $\Sp_n(\CC)$, one infinite family is known to be arithmetic \cite{V}, but the only known thin examples are in $\Sp_4(\CC)$ \cite{SV, BravThomas, FF}.

In this paper, we are interested in a particular infinite family of hypergeometric groups. For $n \geq 2$, define $R_n, U_n, T_n \in \GL_n(\mathbb{C})$ by
\[
R_n = \begin{pmatrix}
0 & 0 & 0 &  & 0 & 0 & -1 \\
1 & 0 & 0 & \cdots & 0 & 0 & -1 \\
0 & 1 & 0 &  & 0 & 0 & -1 \\
& \vdots &  & \ddots &  & \vdots & \\
0 & 0 & 0 &  & 0 & 0 & -1 \\
0 & 0 & 0 & \cdots & 1 & 0 & -1 \\
0 & 0 & 0 & & 0 & 1 & -1
\end{pmatrix},
\qquad
U_n = \begin{pmatrix}
0 & 0 & 0 &  & 0 & 0 & \pm 1 \\
1 & 0 & 0 & \cdots & 0 & 0 & \mp n \\
0 & 1 & 0 &  & 0 & 0 & \pm \binom{n}{n-2} \\
& \vdots &  & \ddots &  & \vdots & \\
0 & 0 & 0 &  & 0 & 0 & \binom{n}{3} \\
0 & 0 & 0 & \cdots & 1 & 0 & -\binom{n}{2} \\
0 & 0 & 0 & & 0 & 1 & n
\end{pmatrix},
\]
and $T_n = U_nR_n^{-1}$, where the signs in the last column of $U_n$ alternate. Let $H_n$ be the hypergeometric group generated by $R_n, U_n, T_n$. The \emph{parameters} of $H_n$ are
\[
\alpha = \left(\frac{1}{n+1}, \frac{2}{n+1}, \ldots, \frac{n}{n+1}\right), \qquad \beta = (0, \ldots, 0).
\]
This means that the eigenvalues of $R_n$ and $U_n$ are $e^{\frac{2\pi i}{n+1}}, \ldots, e^{\frac{2\pi i n}{n+1}}$ and $1, \ldots, 1$, respectively. It follows from the criterion in \cite{BH} that $H_n$ has Zariski closure $\Sp_n(\CC)$ if $n$ is even, and $\SO_n(\CC)$ if $n$ is odd.\footnote{We remark that for odd $n$, the group $H_n = H_{2k+1}$ preserves a symmetric bilinear form of signature $(k+1,k)$. By contrast, \cite{FMS} studies the case of Lorentzian signature $(2k,1)$.} The group $H_n$ arises in algebraic geometry as the monodromy group of a well-studied family of degree $n$ hypersurfaces in $\mathbb{P}^{n-1}$ known as the \emph{Dwork family} (see, e.g., \cite{Katz}).

The group $H_n$ is known to be arithmetic when $n=2,3$ (see \cite{FMS}), and was shown by Brav and Thomas \cite{BravThomas} to be thin when $n=4$. According to Sarnak \cite{Sarnak}, it ``seems likely'' that $H_n$ is thin for all even $n \geq 4$. If this is true, it would provide the first examples of thin subgroups of $\Sp_n(\CC)$ for $n \geq 6$.

To show that $H_4$ is thin, Brav and Thomas used the ping-pong lemma to prove that $H_4$ is isomorphic to the free product $\ZZ/5\ZZ * \ZZ$. The following conjecture generalizes this result, and would imply that $H_n$ is thin for $n \geq 4$.

\begin{conj}
\label{conj:main}
If $n \geq 2$, then
\[
H_n = \langle R_n, T_n \rangle = \langle R_n \rangle * \langle T_n \rangle = \begin{cases}
\ZZ/(n+1)\ZZ * \ZZ & \text{ if } n \text { is even} \\
\ZZ/(n+1)\ZZ * \ZZ/2\ZZ & \text{ if } n \text{ is odd}.
\end{cases}
\]
\end{conj}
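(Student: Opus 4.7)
The plan is to establish Conjecture~\ref{conj:main} via the ping-pong lemma applied to an action of $H_n$ on $\mathbb{P}^{n-1}(\RR)$. Setting $\Gamma_R = \langle R_n \rangle$ and $\Gamma_T = \langle T_n \rangle$, I seek disjoint nonempty subsets $A, B \subset \mathbb{P}^{n-1}(\RR)$ with $g \cdot B \subset A$ for every nontrivial $g \in \Gamma_R$ and $h \cdot A \subset B$ for every nontrivial $h \in \Gamma_T$; this yields simultaneously both $H_n = \Gamma_R * \Gamma_T$ and the containment $H_n = \langle R_n, T_n \rangle$. The present paper carries out precisely this program for $n = 3$ using a simplicial cone in $\RR^3$, following the structural approach used by Brav and Thomas for $n = 4$; the proposed strategy for general $n$ is to extend this cone-based construction.

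The first step is to understand the structure of $T_n$. Since $T_n - I$ has rank one, one can write $T_n = I + v w^\top$ for suitable vectors $v, w$. When $n$ is odd, $H_n$ preserves a nondegenerate symmetric bilinear form, and $T_n$ is a reflection (hence an involution); when $n$ is even, $H_n$ preserves a symplectic form, and $T_n$ is a symplectic transvection (hence unipotent, of infinite order). This dichotomy is directly responsible for the two cases appearing in the conjecture.

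Next, I would construct the ping-pong regions as projectivizations of cones. Let $C \subset \RR^n$ be a simplicial cone spanned by vectors adapted to the joint structure of $R_n$ and $T_n$: natural candidates come from the invariant planes of $R_n$ (whose real eigenspace decomposition of $\RR^n$ consists of two-dimensional rotation planes, plus a $(-1)$-eigenline when $n$ is odd) together with the line $\RR v$ associated with $T_n$'s rank-one perturbation. I would set $A = \mathbb{P}(C)$ and $B = \mathbb{P}(T_n C)$, or a sign-adjusted variant. The ping-pong conditions then translate into assertions about the relative position of $R_n^k(B)$ and $T_n^k(A)$, which should reduce to comparing linear inequalities defining the cones.

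The main obstacle is achieving this construction uniformly in $n$. Already for $n = 3$ the present paper shows that the correct cone is essentially unique up to sign, so identifying the right cone for each $n$ is delicate; the analogous uniqueness question for $n \geq 4$ is itself unresolved. For even $n$, the additional difficulty is verifying that every nonzero power of the unipotent $T_n$ maps $A$ strictly into $B$: this requires controlling the asymptotic direction of $T_n^k x$ as $|k| \to \infty$, which is governed by the image line of $T_n - I$ and must lie inside $B$. I would expect the argument to split by parity of $n$, with the even case (where $H_n \subset \Sp_n(\CC)$) being substantially harder; one could attempt induction on $n$ or seek a uniform description of the cone purely in terms of the hypergeometric eigendata $\alpha, \beta$. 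A fully general proof may ultimately require $n$-dependent case analysis not yet visible at this conjectural stage, which is presumably why the statement is presented as a conjecture rather than a theorem.
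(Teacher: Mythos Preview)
The statement you are attempting to prove is labeled a conjecture in the paper, and the paper does not prove it: only the case $n=3$ is established (Theorem~\ref{thm:main}), with $n=2$ and $n=4$ attributed to prior work. You correctly recognize this in your final sentence, and your proposal is accordingly a strategic outline rather than a proof. At that level of description, your plan---ping-pong with cone-based regions---matches what the paper does for $n=3$ and what Brav--Thomas do for $n=4$.

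That said, there are concrete gaps even as a strategy. First, your suggested generators for the cone $C$ (coming from the invariant planes of $R_n$ and the rank-one direction of $T_n-I$) do not match what actually works for $n=3$: the paper's generators $u,v,w$ are the eigenvectors of $TR$ and $TR^{-1}$ together with a vector lying in the intersection of the column spaces of $\log(TR)$ and $\log(TR^{-1})$ (see \S\ref{sec:logs}), none of which are visible from $R_n$ or from $T_n-I$ alone. Second, and more seriously, the paper's own experiments reported in \S\ref{sec:2d 4d} indicate that for $n=4$ no single simplicial cone $C$ yields a valid table via $X=\pm C$; the Brav--Thomas table instead requires a union of two simplicial cones $C^+ \cup C^-$. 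So the architecture you propose already appears to fail at $n=4$, and any uniform extension to general $n$ would need a more flexible shape for $X$ than a single simplicial cone.
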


This paper undertakes a detailed study of Conjecture \ref{conj:main} in the case $n=3$. In \S \ref{sec:ping-pong}, we use the ping-pong lemma to give an elementary proof that $H_3 \cong \ZZ/4\ZZ * \ZZ/2\ZZ$. To apply the ping-pong lemma, one must define a ``ping-pong table'' in a set on which the group acts. In our case, we consider the natural action of $3 \times 3$ matrices on $\RR^3$, and our ping-pong table is determined by a simplicial cone $C$ in $\RR^3$. We prove in \S \ref{sec:uniqueness} that $C$ is (up to sign) the \emph{only} simplicial cone which gives rise to a ``valid'' ping-pong table via our construction.

In \S \ref{sec:projection}, we use a two-dimensional projection to illustrate the main ideas of the previous sections. Finally, in \S \ref{sec:2d 4d}, we compare our ping-pong table in the three-dimensional case with the (essentially unique) ping-pong table in the two-dimensional case, and with the more complicated ping-pong table of Brav and Thomas in the four-dimensional case. We hope that the juxtaposition of these three examples will inspire future work on Conjecture \ref{conj:main} in higher dimensions.

\begin{rem}
The $n=2$ and $n=3$ cases of Conjecture \ref{conj:main} can be obtained from classical results of Schwarz, Klein, and Clausen \cite{Schwarz,Klein,Clausen}. Indeed, Schwarz and Klein determined the structure of a large class of hypergeometric groups in $\GL_2$ (the so-called \emph{Schwarz triangle groups}), one of which is $H_2$. A result of Clausen implies that $H_3$ is the monodromy group of the symmetric square of one of the hypergeometric differential equations covered by the work of Schwarz and Klein (namely, the equation with parameters $\alpha = (1/8, 3/8)$ and $\beta = (0,0)$). It follows that $H_3$ is isomorphic to the Schwarz triangle group corresponding to these parameters. We refer the reader to \cite[\S\S 2.2, 3.2]{Heckman} for a nice account of this story.
\end{rem}

\subsection*{Acknowledgements}

We owe a great deal of thanks to Hugh Thomas, who suggested this problem, guided our work on it during the summer of 2021, and provided valuable feedback on an earlier version of this paper. We are grateful to Benjamin Dequ\^ene for his help throughout the summer. In addition, we acknowledge the open source software package Sage \cite{Sage}, which we used to carry out experiments and computations for this project.

GF was supported in part by the Canada Research Chairs program. FG and ES were supported by NSERC Discovery Grant RGPIN-2016-04872 and Undergraduate Summer Scholarships from the Institut des Sciences Math\'ematiques (ISM).


\section{A three-dimensional ping-pong table}
\label{sec:ping-pong}

\subsection{Cones}
Given vectors $v_1, \ldots, v_k \in \mathbb{R}^n$, define the \emph{open cone} $C$ generated by $v_1, \ldots, v_k$ to be the set of strictly positive linear combinations of the $v_i$. That is,
\[
C = \{a_1 v_1 + \ldots + a_k v_k \mid a_i \in \mathbb{R}_{> 0}\}.
\]
We will sometimes write $C = \cone(v_1, \ldots, v_k)$. Note that $C$ is unchanged if one of the generators $v_i$ is replaced by a positive scalar multiple $\la v_i, \la > 0$. The cone $C$ is said to be \emph{simplicial} if the generators $v_1, \ldots, v_k$ are linearly independent.

For a subset $S \subseteq \mathbb{R}^n$, we write $\ov{S}$ for the closure of $S$ (in the Euclidean topology). If $C = \cone(v_1, \ldots, v_k)$, then
\[
\ov{C} = \{a_1 v_1 + \ldots + a_k v_k \mid a_i \in \mathbb{R}_{\geq 0}\}.
\]
We call $\ov{C}$ the \emph{closed cone} generated by $v_1, \ldots, v_k$.

A subset $S \subseteq \mathbb{R}^n$ is \emph{convex} if for any two points $x,y \in S$, the line segment
\[
\{\la x + (1-\la) y \mid \la \in [0,1]\}
\]
connecting $x$ and $y$ is contained in $S$. It is easy to verify that cones (both open and closed) are convex.

\subsection{Free products and the ping-pong lemma}

Let $G_1, \ldots, G_d$ be subgroups of a group $G$. A \emph{word (in the elements of the $G_j$)} is a finite sequence $(x_1, \ldots, x_n)$, such that each $x_i$ is an element of at least one of the $G_j$. Each word gives rise to an element of $G$ by multiplication; that is, $(x_1, \ldots, x_n)$ gives rise to the element $g = x_1 \cdots x_n \in G$. In this case, we say that $(x_1, \ldots, x_n)$ is an \emph{expression} for $g$, or that $g$ can be \emph{expressed} as the word $(x_1, \ldots, x_n)$. The \emph{group generated by the subgroups $G_j$}, denoted $\langle G_1, \ldots, G_d \rangle$, is the subgroup of $G$ consisting of all elements that can be expressed as words in the elements of the $G_j$. The subgroups $G_j$ are said to \emph{generate} $G$ if $G = \langle G_1, \ldots, G_d \rangle$.

If $G_1, \ldots, G_d$ generate $G$, there are in general many expressions for each element of $G$ as a word in the elements of the $G_j$. We say that a word $(x_1, \ldots, x_n)$ is \emph{reduced} if none of the $x_i$ is the identity element, and for $i = 1, \ldots, n-1$, the elements $x_i$ and $x_{i+1}$ are not both contained in a single $G_j$. The idea is that identity elements can be removed from a word without changing the resulting element of $G$, and if $x_i, x_{i+1} \in G_j$, these two elements can be replaced by the single element $x_ix_{i+1} \in G_j$. By convention, the empty word gives rise to the identity element of $G$, and is considered to be reduced.

\begin{defn}
Let $G_1, \ldots, G_d$ be subgroups of a group $G$. The group $G$ is the \emph{free product} of the $G_j$ if each $g \in G$ has a unique reduced expression in the elements of the $G_j$. In this case, one writes
\[
G = G_1 * \cdots * G_d.
\]
\end{defn}

We encourage the reader to verify that if $G$ is the free product of $G_1, \ldots, G_d$, then $G_i \cap G_j = \{1\}$ for $i \neq j$. It may also be instructive to find a counterexample to the converse of this statement.

The following result, which is known as the \emph{ping-pong lemma}, is a standard tool for proving that two subgroups of a larger group generate a free product.

\begin{lem}[\cite{LyndonSchupp}]
Let $G,H$ be two non-trivial subgroups of a group $K$, such that at least one of $G$ and $H$ has more than two elements. Suppose $K$ acts on a set $S$, and there are two non-empty subsets $X,Y \subset S$ satisfying the following properties:
\begin{enumerate}
\item $X \cap Y = \emptyset$
\item If $g \in G \backslash \{1\}$ and $x \in X$, then $gx \in Y$
\item If $h \in H \backslash \{1\}$ and $y \in Y$, then $hy \in X$.
\end{enumerate}
Then the subgroup of $K$ generated by $G$ and $H$ is a free product; that is, $\langle G, H \rangle = G*H$.
\end{lem}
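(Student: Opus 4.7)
The plan is to show that every non-empty reduced word in the elements of $G$ and $H$ represents a non-identity element of $K$; from this, uniqueness of reduced expressions (and hence $\langle G, H \rangle = G*H$) follows by a standard argument, since two distinct reduced expressions $w_1, w_2$ for the same element would, after reducing $w_1 w_2^{-1}$, yield a non-empty reduced expression for the identity. So suppose for contradiction that $w = x_1 x_2 \cdots x_n$ is a reduced word with $n \geq 1$ and $w = 1$ in $K$. The entries alternate between $G \setminus \{1\}$ and $H \setminus \{1\}$, and I split into four cases depending on which subgroup contains $x_1$ and $x_n$.

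The two \emph{favorable} cases, in which $x_1$ and $x_n$ lie in the same subgroup, force $n$ to be odd and can be handled directly by ping-pong. Say $x_1, x_n \in G$; pick any $x \in X$. Applying the rightmost letter first, property (2) gives $x_n x \in Y$, then property (3) gives $x_{n-1}(x_n x) \in X$, and so on, each letter flipping the side. After $n$ (odd) applications starting in $X$, the point $wx$ lies in $Y$. But $w = 1$ forces $wx = x \in X$, contradicting (1). The case $x_1, x_n \in H$ is symmetric, starting from a point in $Y$.

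The two \emph{unfavorable} cases, in which $x_1$ and $x_n$ lie in different subgroups, do not close up under direct ping-pong because the innermost application $x_n s$ need not land in the domain of the next letter. I reduce these to the favorable cases by conjugation, and this is where the hypothesis that one of $G, H$ has more than two elements becomes indispensable. Assume without loss of generality $|G| \geq 3$; the case $|H| \geq 3$ is symmetric, with the roles of $G$ and $H$ (and hence of $X$ and $Y$) swapped. If $x_1 \in G$ and $x_n \in H$, choose $g \in G \setminus \{1, x_1\}$, which exists since $|G| \geq 3$. Then $g^{-1} w g = (g^{-1} x_1) x_2 \cdots x_n g$, and since $g^{-1} x_1 \neq 1$ and $g \neq 1$, this is a reduced word of odd length $n+1$ beginning and ending in $G$. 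By the favorable case it represents a non-identity element of $K$, contradicting $w = 1$. The remaining case ($x_1 \in H$, $x_n \in G$) is analogous, conjugating instead to $g w g^{-1} = g x_1 \cdots x_{n-1}(x_n g^{-1})$ with $g \in G \setminus \{1, x_n\}$.

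I expect the unfavorable cases to be the main obstacle, since the conjugation trick genuinely relies on the ``more than two elements'' hypothesis: if instead $G = \{1, x_1\}$, no admissible $g$ would exist, and indeed a classical check (e.g., $\mathbb{Z}/2\mathbb{Z} * \mathbb{Z}/2\mathbb{Z}$ acting on $\mathbb{R}$ by reflections) shows that some such hypothesis beyond (1)--(3) is necessary.
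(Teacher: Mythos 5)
Your proof is correct and is the standard ping-pong argument: the paper itself gives no proof of this lemma (it only cites Lyndon--Schupp), and your treatment --- reducing to showing no non-empty reduced word is the identity, handling the two cases where the word begins and ends in the same factor by bouncing a point of $X$ (resp.\ $Y$) back and forth, and conjugating the mixed cases into those using an element of the factor with at least three elements --- is exactly the argument the citation points to, including the correct use of the cardinality hypothesis. One small quibble with your closing aside: two reflections of $\mathbb{R}$ about distinct points generate the \emph{infinite} dihedral group, which is genuinely $\mathbb{Z}/2\mathbb{Z} * \mathbb{Z}/2\mathbb{Z}$, so that is not a counterexample showing the hypothesis is needed; a correct one is to take $G = H$ of order two acting by the swap on a two-point set $\{x,y\}$ with $X = \{x\}$, $Y = \{y\}$.
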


We will refer to the sets $X$ and $Y$ as a \emph{valid ping-pong table} (for $G$ and $H$) if they satisfy the hypotheses of the ping-pong lemma.

\subsection{A ping-pong table in $\mathbb{R}^3$}
We now consider the three-dimensional case of Conjecture \ref{conj:main}. Writing $R = R_3$, $U = U_3$, and $T = T_3$, we have
\begin{align*}
  &R= \begin{pmatrix*}[r]
    0 & 0 & -1\\
    1 & 0 & -1\\
    0 & 1 & -1
  \end{pmatrix*}
  &&U= \begin{pmatrix*}[r]
    0 & 0 & 1\\
    1 & 0 & -3\\
    0 & 1 & 3
  \end{pmatrix*}
  &&T= \begin{pmatrix*}[r]
    -1 & 0 & 0\\
    2 & 1 & 0\\
    -4 & 0 & 1
  \end{pmatrix*}.
\end{align*}
Note that $R^4 = T^2 = I$.

\begin{thm}
\label{thm:main}
The subgroup of $\GL_3(\RR)$ generated by $R$ and $T$ is the free product of $\braket{R}$ and $\braket{T}$; that is,
\[
\braket{R,T} = \ZZ/4\ZZ * \ZZ/2\ZZ.
\]
\end{thm}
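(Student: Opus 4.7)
The plan is to apply the ping-pong lemma with $G = \braket{R} \cong \ZZ/4\ZZ$ and $H = \braket{T} \cong \ZZ/2\ZZ$, acting on $S = \RR^3$ by matrix multiplication. Since $\braket{R}$ has four elements, the cardinality hypothesis is satisfied, so producing a valid ping-pong table $(X, Y) \subset \RR^3$ immediately yields $\braket{R,T} = \braket{R}*\braket{T} \cong \ZZ/4\ZZ * \ZZ/2\ZZ$.

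Given the abstract's emphasis on a single simplicial cone, I would look for a cone $C = \cone(v_1, v_2, v_3) \subset \RR^3$ and set
\[
X = C, \qquad Y = RC \cup R^2 C \cup R^3 C.
\]
Condition (2) of the ping-pong lemma, $R^k X \subset Y$ for $k = 1,2,3$, is then automatic from the definition. Condition (1), $X \cap Y = \emptyset$, amounts to showing the four open cones $C, RC, R^2 C, R^3 C$ are pairwise disjoint. Condition (3), $TY \subset X$, becomes the three containments $T R^k C \subset C$ for $k = 1, 2, 3$. Since $T$ and $R^k$ are linear and $C$ is the set of strictly positive combinations of $v_1, v_2, v_3$, the latter reduces to checking that the matrix representing $TR^k$ with respect to $(v_1, v_2, v_3)$ has non-negative entries (with no zero column), so that each generator of $R^k C$ is mapped into $\overline{C}$ and open cones go to open cones.

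To verify the disjointness in (1), I would exhibit for each $k \in \{1, 2, 3\}$ a linear functional $\ell_k$ on $\RR^3$ that is strictly positive on every generator of $C$ and non-positive on every generator of $R^k C$; the convexity of cones then promotes this hyperplane separation to disjointness of the open cones themselves. Once the generators $v_1, v_2, v_3$ are fixed, both (1) and (3) collapse to a finite list of $3\times 3$ matrix computations in $\QQ$ (since $R, T$ have integer entries).

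The main obstacle is identifying the cone $C$ itself. Unlike in the two-dimensional case, $R$ has only a single real eigenvector (eigenvalue $-1$, with the remaining eigenvalues $\pm i$ corresponding to a $\pi/2$-rotation on a two-dimensional invariant plane), so there is no eigenbasis from which to read off a natural invariant cone, and the matrix $T$ does not share useful eigenspaces with $R$. I expect the cone to be found either by a guided guess—e.g.\ looking at the $R$-orbit of a well-chosen vector so that rotating $C$ by $90^\circ$ in the $R$-invariant plane produces disjoint cones—or by reverse-engineering $v_1, v_2, v_3$ from the constraints $TR^k v_i \in \overline{C}$ for $i, k \in \{1, 2, 3\}$. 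The uniqueness theorem cited from \S\ref{sec:uniqueness} strongly suggests the ping-pong inequalities pin $C$ down almost completely, so I anticipate the body of the proof will consist of writing down the specific $v_i$ and carrying out the verifications above.
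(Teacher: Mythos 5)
There is a genuine gap, and it is structural: your table $X = C$, $Y = RC\cup R^2C\cup R^3C$ cannot work for \emph{any} simplicial cone $C$. Your condition (3) with $k=2$ demands $TR^2C\subseteq C$, hence $TR^2\,\ov{C}\subseteq\ov{C}$ with $\ov{C}$ a closed, pointed, full-dimensional convex cone. By the Perron--Frobenius/Krein--Rutman theorem (or by applying Brouwer's fixed point theorem to the induced map on a compact convex cross-section of $\ov{C}$), such an invariance forces $TR^2$ to have an eigenvector in $\ov{C}$ with a positive eigenvalue. But a direct computation gives
\[
TR^2=\begin{pmatrix*}[r] 0 & 1 & -1\\ 0 & -3 & 2\\ 1 & 3 & -4\end{pmatrix*},\qquad \det(\lambda I-TR^2)=(\lambda+1)(\lambda^2+6\lambda+1),
\]
whose roots $-1,\,-3\pm2\sqrt2$ are all negative. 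So no such cone exists. This is exactly why the paper takes $X=C\cup(-C)$: with its cone $C=\cone(u,v,w)$, $u=(1,-2,1)$, $v=(1,0,3)$, $w=(0,-1,1)$, one finds $TRC\subseteq C$, $TR^3C\subseteq C$, but $TR^2C\subseteq -C$, which is permitted once $-C$ is part of $X$. The symmetrization also changes your disjointness step: a single separating functional $\ell_k$ with $\ell_k>0$ on $C$ and $\ell_k\le0$ on $R^kC$ cannot separate $C\cup(-C)$ from $R^k(C\cup(-C))$, since both sets are invariant under $s\mapsto -s$; the paper instead writes $R^kq$ in the basis $(u,v,w)$ and exhibits two coordinates of opposite sign.

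Your remaining outline (reducing condition (3) to sign-patterns of $TR^k$ in the basis of generators, and condition (2) being automatic) matches the paper's computations once $-C$ is included. As for finding the cone, which you leave open: the paper takes $u$ and $v$ to be the unique eigenvectors of $TR$ and $TR^{-1}$ (each is a single Jordan block with eigenvalue $1$), and $w$ to span the intersection of the column spaces of $\log(TR)$ and $\log(TR^{-1})$; Section \ref{sec:uniqueness} shows this choice is forced up to sign.
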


\begin{proof}
The group $GL_3(\RR)$ acts on $\RR^3$ by matrix multiplication. We will find disjoint subsets $X,Y \subset \RR^3$ such that all elements of $X$ are sent to $Y$ by $R,R^2,$ and $R^3$, and all elements of $Y$ are sent to $X$ by $T$, which will allow us to conclude that $\braket{R, T} \cong \braket{R} * \braket{T}$ by the ping-pong lemma.

Let $C$ be the open cone generated by the vectors
\begin{align*}
  &u = (1, -2, 1), & &v = (1, 0, 3), & &w = (0, -1, 1),
\end{align*}
that is, $C = \{au + bv + cw \mid a \in \RR_{> 0}\}$. Define
\[
X = C \cup -C, \qquad\qquad Y = RX \cup R^2X \cup R^3X.
\]
It is immediately clear from this definition that each non-identity element of $\braket{R}$ maps $X$ into $Y$, so hypothesis (2) of the ping-pong lemma is satisfied.

Now suppose there is a point $p = au + bv + cw \in X \cap Y$. Since $p \in X$, the coefficients
$a,b,c$ are all nonzero and of the same sign. Since $p \in Y$, there exists a point $q = xu + yv + zw \in X$ (so again $x,y,z$ are nonzero and of the same sign), such that $R, R^2$ or $R^3$
maps $q$ to $p$. Explicitly, we have
\begin{align*}
  &q = (x + y, -2x - z, x + 3y + z)\\
  &Rq = (-x -3y - z, -2y - z, -3x - 3y - 2z)\\
  &R^2q = (3x + 3y + 2z, 2x + z, 3x + y + z)\\
  &R^3q = (-3x - y - z, 2y + z, -x - y)\\
  &p = (a + b, -2a - c, a + 3b + c).
\end{align*}
This gives us three systems $p = R^iq$ which solve to
\begin{align*}
&p = Rq && \implies &&a = -y &&b = -x - 2y - z &&c = 4y + z\\
&p = R^2q && \implies &&a = x + 2y + z &&b = 2x + y + z &&c= -4x - 4y - 3z\\
&p = R^3q && \implies &&a = -2x - y - z &&b = -x &&c = 4x + z.
\end{align*}
Remembering that the triples $(x,y,z)$ and $(a,b,c)$ must be nonzero and either
all positive or all negative, we obtain a contradiction in each case:
\begin{itemize}
  \item In the first case, if $x,y,z$ are positive, then $a = -y$ is negative, but
  $c = 4y + z$ is positive, and vice versa in the negative case.
  \item In the second case, again $a = x + 2y + z$ and $c = -4x - 4y -3z$ cannot
  have the same sign if $x,y,z$ have the same sign.
  \item The same goes in the third case for $a = -2x - y - z$ and $c = 4x + z$.
\end{itemize}
These contradictions prove that $X$ and $Y$ are indeed disjoint.

We will now verify that $T$ sends $Y$ into $X$ using a similar argument. As before, let $q = xu + yv + zw$ be a point in $X$. If we apply $T$ to $Rq, R^2q, R^3q$, we get
\begin{align*}
  &TRq = (x+3y+z,-2x-8y-3z,x+9y+2z)\\
  &TR^2q = (-3x-3y-2z,8x+6y+5z,-9x-11y-7z)\\
  &TR^3q = (3x+y+z,-6x-z,11x+3y+4z).
\end{align*}
This time solving the systems $p = TR^iq$ (where $p = au + bv + cw$) nets us:
\begin{align*}
  &p = TRq && \implies &&a=x+2y+z &&b=y &&c=4y+z\\
  &p = TR^2q && \implies &&a=-2x-y-z &&b=-x-2y-z &&c=-4x-4y-3z\\
  &p = TR^3q && \implies &&a=x &&b=2x+y+z &&c=4x+z
\end{align*}
In this case we see that the signs of $a,b,c$ all properly match,
which confirms that $T$ does send $Y$ into $X$, completing the proof.
\end{proof}

\subsection{Matrix logarithms}
\label{sec:logs}

At this point, the reader may be wondering how we arrived at the definition of the cone $C$. The explanation requires an examination of the linear maps $TR$ and $TR^{-1}$, and their logarithms. In addition to motivating the choice of generators $u,v,$ and $w$, the formulas derived below play an essential role in the proof of the uniqueness of $C$ in the next section.

The matrix $U = TR$ has Jordan form
\[
\begin{pmatrix}
1 & 1 & 0 \\
0 & 1 & 1 \\
0 & 0 & 1
\end{pmatrix}.
\]
This means that 1 is the only eigenvalue of $TR$, and the corresponding eigenspace is 1-dimensional. The vector $u = (1,-2,1)$ spans this eigenspace. The matrix
\[
R(TR)^{-1}R^{-1} = TR^{-1}
\]
has the same Jordan form as $TR$, and its 1-dimensional eigenspace is spanned by $v = (1,0,3)$.

By the hypotheses of the ping-pong lemma, any positive integer power of the linear transformations $TR$ and $TR^{-1}$ must map $X$ to itself. To understand the powers of these matrices, we use the Taylor expansions of $\log$ and $\exp$, which allow us to define
\[
(TR)^t = \exp(t \log(TR)) \qquad \text{ and } \qquad (TR^{-1})^t = \exp(t \log(TR^{-1}))
\]
for all $t \in \RR$. For $TR$, we compute
\begin{align*}
\log(TR) &= (TR-I) - \dfrac{(TR-I)^2}{2} + \dfrac{(TR-I)^3}{3} - \cdots \\
&=
\begin{pmatrix}
-1 & 0 & 1 \\
1 & -1 & -3 \\
0 & 1 & 2
\end{pmatrix}
- \dfrac{1}{2}
\begin{pmatrix}
1 & 1 & 1 \\
-2 & -2 & -2 \\
1 & 1 & 1
\end{pmatrix}
+ 0 - \cdots =
\begin{pmatrix}
-\frac32 & -\frac12 & \frac12 \\
2 & 0 & -2 \\
-\frac12 & \frac12 & \frac32
\end{pmatrix},
\end{align*}
and then
\begin{multline}
\label{eq:TR^t}
(TR)^t = \exp(t \log(TR)) = I + t \log(TR) + \dfrac{t^2}{2!} \log(TR)^2 + \cdots \\
=
\begin{pmatrix}1 & 0 & 0 \\ 0 & 1 & 0 \\ 0 & 0 & 1
\end{pmatrix}+t\begin{pmatrix}-\frac32 & -\frac12 & \frac12 \\ 2 & 0 & -2 \\ -\frac12 & \frac12 & \frac32
\end{pmatrix}+t^2\begin{pmatrix}\frac12 & \frac12 & \frac12 \\ -1 & -1 & -1 \\ \frac12 & \frac12 & \frac12
\end{pmatrix}.
\end{multline}
Similarly, we compute
\begin{equation}
\label{eq:TR^-1^t}
(TR^{-1})^t = \begin{pmatrix}1 & 0 & 0 \\ 0 & 1 & 0 \\ 0 & 0 & 1
\end{pmatrix}+t\begin{pmatrix}-\frac32 & -\frac12 & \frac12 \\ -3 & 1 & 1 \\ -\frac32 & -\frac52 & \frac12
\end{pmatrix}+t^2\begin{pmatrix} \frac32 & -\frac12 & -\frac12 \\ 0 & 0 & 0 \\ \frac92 & -\frac32 & -\frac32 \end{pmatrix}.
\end{equation}

Let $P = \log(TR)$ and $Q = \log(TR^{-1})$ (these are the coefficients of $t$ in \eqref{eq:TR^t} and \eqref{eq:TR^-1^t}, respectively). As the reader may easily verify, both $P$ and $Q$ have rank two, and their column spaces intersect in the line spanned by $w = (0,-1,1)$. It is perhaps not clear why this intersection should be useful in defining a ping-pong table. In \S \ref{sec:uniqueness 2d}, we consider a two-dimensional projection that clearly illustrates the significance of this intersection.


\section{Uniqueness of the cone $C$}
\label{sec:uniqueness}

Let $C'$ be the open cone generated by three linearly independent vectors $u',v',w' \in \mathbb{\RR}^3$, and define
\[
X = C' \cup - C', \qquad\qquad Y = RX \cup R^2X \cup R^3X.
\]
The goal of this section is to prove the following uniqueness theorem.

\begin{thm}
\label{thm:uniqueness}
If $X$ and $Y$ form a valid ping-pong table for $\langle R \rangle$ and $\langle T \rangle$, then $C' = C$ or $C' = -C$, where $C = \cone(u,v,w)$ is the cone defined in the previous section.
\end{thm}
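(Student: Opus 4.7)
The plan is to show that the three extremal rays of $\overline{C'}$ are $u$, $v$, and $w$ (up to a global sign). Both $TR$ and $TR^{-1}$ preserve $X = C' \cup -C'$, via the transitions $X \xrightarrow{R^{\pm 1}} Y \xrightarrow{T} X$. Since $TR(C')$ is a connected open subset of the disjoint union $C' \sqcup -C'$, it lies entirely in one component; from \eqref{eq:TR^t}, the dominant $n^2$-term of $(TR)^n x$ is a scalar multiple of $u$, which rules out $TR(C') \subseteq -C'$ (as this would put both $\pm u$ in $\overline{C'}$, violating pointedness). So $TR(C') \subseteq C'$ after possibly replacing $C'$ with $-C'$, and taking $n \to \infty$ in $(TR)^n x / n^2$ gives $u \in \overline{C'}$. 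Applying $(TR^{-1})^n$ to $u$ and using \eqref{eq:TR^-1^t} gives $(TR^{-1})^n u = u + 4nw + 2n^2 v \in \overline{C'}$; normalizing by $2n^2$ then gives $v \in \overline{C'}$.

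The central step is to show that the 2D face of $\overline{C'}$ containing $u$ equals $\cone(u, w)$. First, $u \in \partial \overline{C'}$: from \eqref{eq:TR^t} the $n^2$-coefficient of $(TR)^n x$ is a linear functional of $x$ times $u$, and if $u$ were interior to $\overline{C'}$, picking a nearby point with a negative value of this functional would produce iterates tending in direction $-u$, contradicting pointedness. Now $TR$ restricts to the shear $\left(\begin{smallmatrix}1 & 1\\ 0 & 1\end{smallmatrix}\right)$ on $\mathrm{span}(u, w) = \ker(TR - I)^2$ in basis $(u, w)$ (since $TRu = u$ and $TRw = u + w$), so projectively at $[u]$ the map $TR$ is a shear with eigendirection $[w]$. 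Analyzing the invariant 2D regions under this projective shear forces the tangent direction of $\partial[\overline{C'}]$ at $[u]$ to point toward $[w]$, so the 2D face $F_1$ through $u$ lies in $\mathrm{span}(u, w)$. A direct classification of $TR$-invariant 2D closed convex pointed cones in $\mathrm{span}(u, w)$ shows they all have one edge along $\mathbb{R} u$, and because $u$ itself is in $F_1$ this edge is $\mathbb{R}_{\geq 0} u$; so $u$ is an extremal ray of $\overline{C'}$ and the other edge of $F_1$ has the form $\mathbb{R}_{\geq 0}(a u + b w)$ with $b > 0$.

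The symmetric argument with $TR^{-1}$ shows $v$ is an extremal ray and that the 2D face $F_2$ through $v$ lies in $\mathrm{span}(v, w)$ with the analogous structure. Since any two 2D faces of a simplicial 3-cone share an extremal ray, $F_1 \cap F_2$ is a ray in $\mathrm{span}(u, w) \cap \mathrm{span}(v, w) = \mathbb{R} w$; combined with $b > 0$ this forces the shared edge to be $\mathbb{R}_{\geq 0} w$ (the option $\mathbb{R}_{\geq 0}(-w)$ is incompatible with $b > 0$), so $a = 0$ and $F_1 = \cone(u, w)$, $F_2 = \cone(v, w)$. Hence the three extremal rays of $\overline{C'}$ are $u$, $v$, $w$, and $\overline{C'} = \cone(u, v, w) = \overline{C}$, which gives $C' = C$ (or $C' = -C$ depending on the initial sign choice). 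The main obstacle I anticipate is the projective shear step: cleanly justifying that the tangent direction of $\partial[\overline{C'}]$ at $[u]$ must be $[w]$. A cleaner alternative may be to work entirely within the $TR$-invariant plane $\mathrm{span}(u, w)$ and directly classify the 2D intersection $\overline{C'} \cap \mathrm{span}(u, w)$ as a $TR$-invariant 2D cone.
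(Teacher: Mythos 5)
Your opening moves---showing $TR(\ov{C'})$ and $TR^{-1}(\ov{C'})$ each land in $\ov{C'}$ or $-\ov{C'}$, extracting $u$ and $v$ from the leading $t^2$-coefficients of $(TR)^t q$ and $(TR^{-1})^t u$, and placing $u$ on the boundary of $\ov{C'}$---are correct and match the substance of the paper's Lemma \ref{lem:uv}. Where you genuinely diverge is in identifying the third generator. The paper writes $w' = \la u + \mu v + \eta w$ and kills $\mu$ and $\la$ by direct computation: the coordinates of $TR(q)$ and $TR^{-1}(q)$ in the basis $(u,v,w')$ contain the terms $-4\mu^2 z$ and $-4\la^2 z$, which produce sign violations unless $\mu=\la=0$. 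You instead try to read off the facet structure of $\ov{C'}$ from the dynamics of the unipotent maps near their fixed rays. That route can be completed, but the step you yourself flag---``analyzing the invariant 2D regions under this projective shear forces the tangent direction of $\partial[\ov{C'}]$ at $[u]$ to point toward $[w]$''---carries essentially the whole weight of the theorem at that point and is asserted rather than proved. As written, that is the gap.

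Two ingredients are needed to close it. First, the 2D classification you invoke: a closed, convex, pointed, two-dimensional cone invariant under a nontrivial unipotent map must have the eigendirection (up to sign) as an extremal ray. (If the cone met both open half-planes bounded by the eigenline, iterating would force both signs of the eigenvector into the closed cone, contradicting pointedness; so the cone lies in one closed half-plane and meets the eigenline in a single ray, which is then an edge.) Second, a reduction from $\ov{C'}$ to such a 2D cone, and here your suggested ``cleaner alternative'' does not work as stated: $\ov{C'} \cap \mathrm{span}(u,w)$ could be the single ray $\RR_{\geq 0}u$, or a 2D slice through the interior of $\ov{C'}$ that is not a facet, so classifying it does not by itself locate a facet in $\mathrm{span}(u,w)$. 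The reduction that does work is to apply the classification to the tangent cone of $\ov{C'}$ at $u$ taken modulo $\RR u$, which is invariant under the induced (nontrivial) unipotent map on $\RR^3/\RR u$ with eigendirection the class of $w$; this forces $\pm w \bmod \RR u$ to be an edge direction of that tangent cone, i.e.\ a facet through $u$ lies in $\mathrm{span}(u,w)$, and a second application of the classification inside that plane shows $u$ is a vertex with the facet's other edge of the form $\RR_{\geq 0}(au+bw)$, $b>0$. With that supplied, your symmetric argument for $TR^{-1}$ and the final step $F_1 \cap F_2 \subseteq \RR w$ are fine. The trade-off versus the paper: your argument is coordinate-free and explains why $w$ appears (it spans $\mathrm{span}(u,w)\cap\mathrm{span}(v,w)$, the intersection of the two unipotent-invariant facets), while the paper's computation is shorter and needs no convex-geometric lemmas.
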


The proof consists of two steps, the first of which is carried out in the following lemma.

\begin{lem}
\label{lem:uv}
Suppose $X$ and $Y$ form a valid ping-pong table for $\langle R \rangle$ and $\langle T \rangle$.
\begin{enumerate}
\item[(a)] Let $M = (TR^j)^t$ for fixed $j \in \{1,2,3\}$ and $t \in \ZZ_{> 0}$. Either
\[
M(\ov{C'}) \subseteq \ov{C'} \qquad \text{ or } \qquad M(\ov{C'}) \subseteq -\ov{C'}.
\]
\item[(b)] The lines spanned by $u$ and $v$ are contained in $\ov{X}$.
\item[(c)] Two of the generators of $C'$ are $u$ and $v$ (or $-u$ and $-v$).
\end{enumerate}
\end{lem}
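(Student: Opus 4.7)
The plan is to prove (a)--(c) in sequence, using the expansions~\eqref{eq:TR^t} and~\eqref{eq:TR^-1^t} and the notation $P = \log(TR)$, $Q = \log(TR^{-1})$ from Section~2.4.

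For (a), the ping-pong hypothesis says $M = (TR^j)^t$ maps $X$ into $X$. Since $M$ is a homeomorphism of $\RR^3$, the set $M(C')$ is open and connected; because $C'$ is simplicial (hence pointed), it is disjoint from $-C'$. So $M(C') \subseteq C'$ or $M(C') \subseteq -C'$, and taking closures yields the claim.

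For (b), the sign function $t \mapsto \epsilon_t \in \{\pm 1\}$ defined by $(TR)^t \overline{C'} \subseteq \epsilon_t\, \overline{C'}$ (from (a)) is multiplicative, so $\epsilon_{2k} = +1$ and $(TR)^{2k}\, \overline{C'} \subseteq \overline{C'}$. From~\eqref{eq:TR^t}, $\tfrac{1}{2}P^2 x = \tfrac{1}{2}(x_1+x_2+x_3)\, u$, so $\tfrac{1}{4k^2}(TR)^{2k} x \to \tfrac{1}{2}(x_1+x_2+x_3)\, u$ as $k \to \infty$, and this limit lies in $\overline{C'}$ (closed under positive scaling). Choosing $x \in \overline{C'}$ with $x_1+x_2+x_3 \neq 0$ (available because $\overline{C'}$ is $3$-dimensional, hence not contained in $H = \{x_1+x_2+x_3 = 0\}$) puts a nonzero multiple of $u$ in $\overline{C'}$, whence $\RR u \subseteq \overline{C'} \cup (-\overline{C'}) = \overline{X}$. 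The identical argument with $(TR^{-1})^t = (TR^3)^t$ and $\tfrac{1}{2}Q^2 x = \tfrac{1}{2}(3x_1-x_2-x_3)\, v$ yields $\RR v \subseteq \overline{X}$.

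For (c), after possibly swapping $C'$ with $-C'$, assume $u \in \overline{C'}$. Applying the argument of (b) with $(TR^{-1})^{2k}$ to the specific point $x = u$ gives $\tfrac{1}{4k^2}(TR^{-1})^{2k} u \to 2v$ (since $3(1)-(-2)-1 = 4 > 0$), forcing $v \in \overline{C'}$ and ruling out the mixed-sign possibility $-v \in \overline{C'}$. For the extreme-ray claim, the same asymptotic forces $\overline{C'} \subseteq \{x_1+x_2+x_3 \geq 0\}$ (else one would also produce $-u \in \overline{C'}$, violating pointedness), so $u$ lies on the boundary of $\overline{C'}$ on the supporting hyperplane $H$. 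The main obstacle is excluding that $u$ lies in the relative interior of a $2$-dimensional face $F_u$; in that case the unique supporting hyperplane at $u$ equals the plane of $F_u$, forcing $F_u = \overline{C'} \cap H$. I would derive a contradiction by applying (a) on the $H$-slice: for $x \in \overline{C'} \cap H$ we have $P^2 x = 0$, so $(TR)^{2k} x = x + 2k\, Px \in \overline{C'}$, and dividing by $2k$ and letting $k \to \infty$ gives $Px \in \overline{C'}$. In the basis $\{u, h\}$ of $H$ with $h = (1,0,-1)$ one computes $Ph = -2u$, so for $x = \alpha u + \beta h$ the requirement $Px = -2\beta u \in \overline{C'} \cap \RR u = \RR_{\geq 0}\, u$ forces $\beta \leq 0$. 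Hence $\overline{C'} \cap H \subseteq \{\beta \leq 0\}$; but $u$ has $\beta = 0$ and so lies on the boundary of this half-plane, contradicting $u$ in the relative interior of $F_u = \overline{C'} \cap H$. The symmetric argument with $TR^{-1}$ and $H' = \{3x_1-x_2-x_3 = 0\}$ shows $v$ is also an extreme ray.
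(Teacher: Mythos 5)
Your proof is correct, and its overall strategy matches the paper's: part (a) by a topological argument on $X = C' \sqcup -C'$, and parts (b), (c) by exploiting the quadratic growth of $(TR)^t$ and $(TR^{-1})^t$ toward the eigenvectors $u$ and $v$. The execution differs in two places worth noting. For (b), the paper computes the normalized limit of $(TR)^t(q)$ for an \emph{arbitrary} nonzero $q$ (the case analysis in \eqref{eq:U^t}) and only uses that $X$ is scale-invariant; you instead feed part (a) back in via the parity trick $\epsilon_{2k} = \epsilon_k^2 = +1$ and take rescaled limits $\tfrac{1}{4k^2}(TR)^{2k}x \to \tfrac12(x_1+x_2+x_3)u$, which avoids the case analysis but makes (b) depend on (a). For (c), the paper perturbs $u$ along a two-sided segment $u + \la q$, $|\la| \le \epsilon$, inside $\ov{C'}$ and observes that the two halves flow to the opposite rays $\pm u$, contradicting (a); you instead first show $H = \{x_1+x_2+x_3 = 0\}$ is a supporting hyperplane (so $u \in \partial\ov{C'}$), and then handle the remaining facet case by restricting the dynamics to $H$, where $(TR)^{2k}$ acts as $I + 2kP$ and $Ph = -2u$ forces $\ov{C'} \cap H$ into a half-plane with $u$ on its boundary. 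These are two packagings of the same underlying mechanism (a transverse direction is killed by $P^2$, an $H$-direction by $P$), and both are complete; your version buys a cleaner limit computation at the cost of invoking some standard facts about faces and supporting hyperplanes of simplicial cones that the paper's segment argument sidesteps.
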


\begin{proof}
The hypotheses of the ping-pong lemma imply that $M$ maps $X$ into $X$. Since linear transformations are continuous, this implies that $M$ maps $\ov{X}$ into $\ov{X}$. Suppose there are non-zero vectors $q_1,q_2 \in \ov{C'}$ such that $M(q_1) \in \ov{C'}$ and $M(q_2) \in -\ov{C'}$. Linear transformations map line segments to line segments, so the convexity of $\ov{C'}$ implies that the line segment from $M(q_1)$ to $M(q_2)$ is contained in $\ov{X} = \ov{C'} \cup -\ov{C'}$. This can only happen if the line segment connecting $M(q_1)$ and $M(q_2)$ passes through the origin, that is, if $M(q_1) = -\la M(q_2)$ for some $\la > 0$. Since $M$ is invertible, this would imply that $q_1 = - \la q_2$, so $q_1, q_2 \in \ov{C'} \cap -\ov{C'} = \{0\}$, a contradiction. This proves (a).

To prove part (b), we will show that for any nonzero vector $q = (x,y,z) \in \RR^3$, the vectors $(TR)^t(q)$ approach the line generated by $u$ as $t$ approaches infinity, and the vectors $(TR^{-1})^t(q)$ approach the line generated by $v$. By \eqref{eq:TR^t}, we have
\begin{equation}
\label{eq:TR^t(s)}
(TR)^{t}(q)= \begin{pmatrix}
\frac12(x+y+z)t^2 + \frac12(-3x-y+z)t + x \smallskip \\
-(x+y+z)t^2 + 2(x-z)t + y \smallskip \\
\frac12(x+y+z)t^2 + \frac12(-x+y+3z)t + z  \end{pmatrix}.
\end{equation}
For a nonzero vector $a$, let $\widehat{a}$ denote the normalization of $a$ (i.e., the vector $a$ divided by its Euclidean norm). Using the fact that $\lim_{t \to \infty} \widehat{(TR)^t(q)}$ depends only on the coefficients of the highest power of $t$ appearing in $(TR)^t(q)$, we find that
\begin{equation}
\label{eq:U^t}
\lim_{t \to \infty} \widehat{(TR)^t(q)} = \begin{cases}
\dfrac{x+y+z}{|x+y+z|} \widehat{u} & \text{ if } x+y+z \neq 0 \medskip \\
\dfrac{z-x}{|z-x|} \widehat{u} & \text{ if } x+y+z = 0 \text{ and } x \neq z \medskip \\
\dfrac{x}{|x|} \widehat{u} & \text{ if } x+y+z = 0 \text{ and } x = z
\end{cases}.
\end{equation}
In all cases, the normalization of $(TR)^t(q)$ approaches $\pm \widehat{u}$, one of the two unit eigenvectors of $TR$. Similarly, using \eqref{eq:TR^-1^t}, we find that
\begin{equation}
\label{eq:U'^t}
\lim_{t\to\infty}\widehat{(TR^{-1})^t(q)} = \begin{cases}
\dfrac{3x-y-z}{|3x-y-z|} \widehat{v} & \text{ if } 3x-y-z \neq 0 \medskip \\
\dfrac{-y}{|y|} \widehat{v} & \text{ if } 3x-y-z = 0 \text{ and } y \neq 0 \medskip \\
\dfrac{x}{|x|} \widehat{v} & \text{ if } 3x-y-z = 0 \text{ and } y = 0
\end{cases},
\end{equation}
so in all cases the normalization of $(TR^{-1})^t(q)$ approaches $\pm \widehat{v}$, one of the two unit eigenvectors of $TR^{-1}$.

If $q \in X$, then as observed in the proof of part (a), $(TR)^t(q)$ and $(TR^{-1})^t(q)$ must be in $X$ for any positive integer $t$. Thus, since $X$ is closed under scalar multiplication (and non-empty), the previous calculations tell us that each point on the lines spanned by $u$ and $v$ is a limit point of a sequence of points in $X$, so these lines are in the closure of $X$. This proves (b).

It remains to prove (c). By part (b), we may assume that $u$ is contained in $\ov{C'}$ (possibly after replacing $C'$ with $-C'$). Suppose that $u$ is not a generator of $C'$. This means that $u$ is contained in the interior of $\ov{C'}$, or in the interior of a face of $\ov{C'}$. In either case, we can find a vector $q = (x,y,z)$ which is not a scalar multiple of $u$, such that the line segment
\[
\{u + \la q \mid |\la| \leq \epsilon\}
\]
is contained in $\ov{C'}$ for sufficiently small $\epsilon > 0$. All points $(a,b,c)$ which satisfy both $a+b+c = 0$ and $c-a = 0$ are on the line spanned by $u = (1,-2,1)$, so we must have $x+y+z \neq 0$ or $z-x \neq 0$. We may assume that $x+y+z > 0$, or that $x+y+z = 0$ and $z-x > 0$. By \eqref{eq:U^t}, the sequence $(TR)^t(u + \la q)$ approaches the ray generated by $u$ if $\la \geq 0$, and the ray generated by $-u$ if $\la < 0$. This contradicts part (a).

A similar argument using \eqref{eq:U'^t} shows that $v$ must be a generator of $C'$ or $-C'$. To see that $v$ must in fact be a generator of $C'$, note that $(TR)^t(u) = u$ for all $t$, and $(TR)^t(v) = (TR)^t(1,0,3)$ approaches the ray generated by $u$ by \eqref{eq:U^t}. Now part (a) guarantees that $v \not \in -\ov{C'}$.
\end{proof}

\begin{rem}
The proof of part (b) works for any valid ping-pong table in which $X$ is closed under scalar multiplication.
\end{rem}

\begin{proof}[Proof of Theorem \ref{thm:uniqueness}]
By Lemma \ref{lem:uv}(c), we may assume (possibly after replacing $C'$ with $-C'$) that two of the generators of $C'$ are $u$ and $v$. Suppose $C' = \cone(u,v,w')$, where
\[
w' = \la u + \mu v + \eta w = \begin{pmatrix}
\la + \mu \\
-2\la -\eta \\
\la + 3\mu + \eta
\end{pmatrix}
\]
for some $\la,\mu,\eta \in \mathbb{R}$. Since $u,v,w'$ are assumed to be linearly independent, we must have $\eta \neq 0$. We first show that $\eta > 0$.

Applying \eqref{eq:TR^t(s)} to $v = (1,0,3)$, we obtain
\[
(TR)^t(v) = \begin{pmatrix}
2t^2 + 1 \\
-4t^2 - 4t \\
2t^2 + 4t + 3
\end{pmatrix}.
\]
Solving a system of linear equations, we find that $(TR)^t(v) = au + bv + cw'$, where
\[
a = 2t^2 - \frac{4\la}{\eta}t, \qquad b = 1 - \frac{4\mu}{\eta}t, \qquad c = \frac{4}{\eta}t.
\]
Since $v \in \ov{X}$, the hypotheses of the ping-pong lemma require that $(TR)^t(v)$ be in $\ov{X}$ for all $t \in \mathbb{Z}_{> 0}$. This means that for such $t$, we must have $a,b,c \geq 0$ or $a,b,c \leq 0$. For large $t$, $a$ is positive and $c$ has the same sign as $\eta$. This shows that $\eta$ must be positive, as claimed.

Scaling $w'$ by a positive constant does not change $C'$, so we may assume that $w' = \la u + \mu v + w$. We will now show that $TR(X) \not \subseteq X$ if $\mu \neq 0$, and $TR^{-1}(X) \not \subseteq X$ if $\la \neq 0$. Suppose $x,y,z > 0$, so that $q = xu + yv + zw'$ is in $C'$. Solving a system of linear equations, we find that $TR(q) = au + bv + cw'$, where
\begin{align*}
a &= x + (2-4\la)y + (1+2\mu-4\la\mu)z \\
b &= (1-4\mu)y - 4\mu^2 z \\
c &= 4y + (1+4\mu)z.
\end{align*}
The crucial feature of these formulas is the presence of $\mu^2$ in the equation for $b$. This means that if $\mu \neq 0$, then by choosing $z$ sufficiently large, we can make $b$ negative. But for any fixed choice of $z$, we can make $a$ positive by choosing $x$ sufficiently large. This shows that there is a choice of $x,y,z > 0$ such that $a$ and $b$ do not have the same sign, contradicting the assumption that $TR$ maps $X$ to itself. We conclude that $\mu = 0$.

Next, we compute $TR^{-1}(q) = a'u + b'v + c'w'$, where
\begin{align*}
a' &= (1-4\la)x - 4\la^2 z \\
b' &= y + (2-4\mu)x + (1+2\la-4\la\mu)z \\
c' &= 4x + (1+4\la)z.
\end{align*}
If $\la \neq 0$, we can make $a'$ negative by choosing $z$ sufficiently large, and then we can make $b'$ positive by choosing $y$ sufficiently large. This contradicts the assumption that $TR^{-1}$ maps $X$ to itself, so we must have $\la = 0$. We conclude that $w'$ is a positive scalar multiple of $w$.
\end{proof}


\section{Two-dimensional projection}
\label{sec:projection}

\subsection{Definition of the projection}
\label{sec:proj def}

In order to better understand the algebraic arguments in the previous sections, it is useful to project from $\RR^3$ to a plane, where we can more easily visualize what is going on. Given a linear functional $\phi \colon \RR^3 \rightarrow \RR$, we can send a vector $s \in \RR^3$ to $s/\phi(s)$, provided $\phi(s) \neq 0$. Since $\phi$ is linear, $\phi(s/\phi(s)) = \phi(s)/\phi(s) = 1$. Thus, the map $\rho \colon s \mapsto s/\phi(s)$ amounts to projecting $s$ onto the plane $P = \{s \mid \phi(s)=1\}$.

We will use the projection $\rho$ determined by the linear functional
\[
\phi(x,y,z) = x-y+z.
\]
This choice of $\phi$ satisfies $\phi(u), \phi(v), \phi(w) > 0$, so the cone generated by $u,v,w$ projects to a triangle in the plane $P$. We need to choose a system of coordinates on $P$. The fundamental theorem of affine geometry tells us that for any three points $p,q,r \in \mathbb{R}^2$ which are not collinear, there is a unique affine transformation from $P$ to $\mathbb{R}^2$ sending $\rho(u), \rho(v), \rho(w)$ to $p,q,r$. For simplicity, we choose
\[
p = (0,1), \qquad q = (1,0), \qquad r = (1,1),
\]
which leads to the projection map
\begin{equation}
\label{eq:projection}
\rho(x,y,z) = \left(\frac{-2(x-z)}{x-y+z}, \frac{-2y}{x-y+z}\right).
\end{equation}
Applying $\rho$ to $X$ and $Y$, we obtain Figure \ref{fig:projection_uvw}, which illustrates the fact that $X$ and $Y$ define a valid ping-pong table.

\begin{figure}[h]
\centering \includegraphics[scale=3]{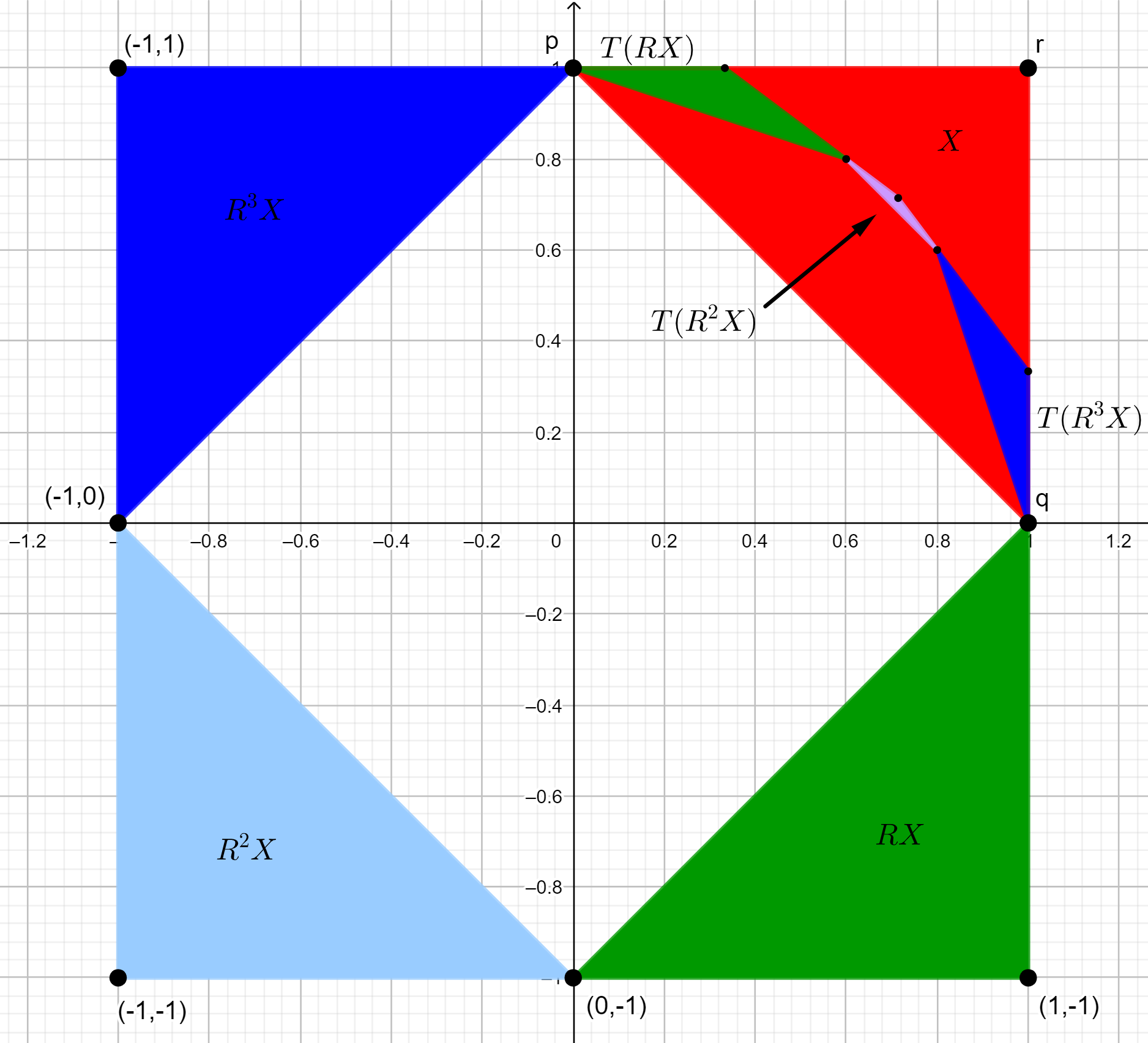}
\caption{The large triangle in the first quadrant (colored red) is the projection of $X = \pm C$. The three large triangles in the other quadrants are the projections of $RX, R^2X$, and $R^3X$. The smaller triangles in the first quadrant are the projections of $T(RX), T(R^2X)$, and $T(R^3X)$.}
\label{fig:projection_uvw}
\end{figure}

We now express the maps $R$ and $T$ in terms of the coordinates $(a,b)$ on $\mathbb{R}^2$. The point $(a,b)$ is the image of a line in $\mathbb{R}^3$, and a straightforward computation shows that the line which maps to $(a,b)$ is spanned by the vector $(x,y,z)$, where
\begin{align}
  x = -\frac{a}{4} - \frac{b}{4} + \frac{1}{2} \qquad\qquad y = -\frac{b}{2} \qquad\qquad z =  \frac{a}{4} - \frac{b}{4} + \frac{1}{2}.
\end{align}
If we apply $R$ and $T$ to $(x,y,z)$ and then apply $\rho$, we obtain the following formulas for the actions of $R$ and $T$ on $\mathbb{R}^2$:
\begin{align}
  \label{eq:R} R(a,b) &= (b, -a),\\
  \label{eq:T} T(a,b) &= \left(\frac{2a + b - 2}{2a + 2b - 3}, \frac{a + 2b - 2}{2a + 2b - 3}\right).
\end{align}
In particular, $R$ is rotation by 90 degrees (clockwise). This explains why the projection of $Y$ consists of the rotations of $X$ (the red triangle) in Figure \ref{fig:projection_uvw}.

\subsection{Uniqueness revisited}
\label{sec:uniqueness 2d}

Using the projection $\rho$, we can give a more visual explanation of the uniqueness of the cone $C$. The following argument is similar in spirit to the proof of uniqueness given in \S \ref{sec:uniqueness}, although it does not exactly correspond to the steps of that proof.

\begin{figure}
\centering \includegraphics[scale=0.1]{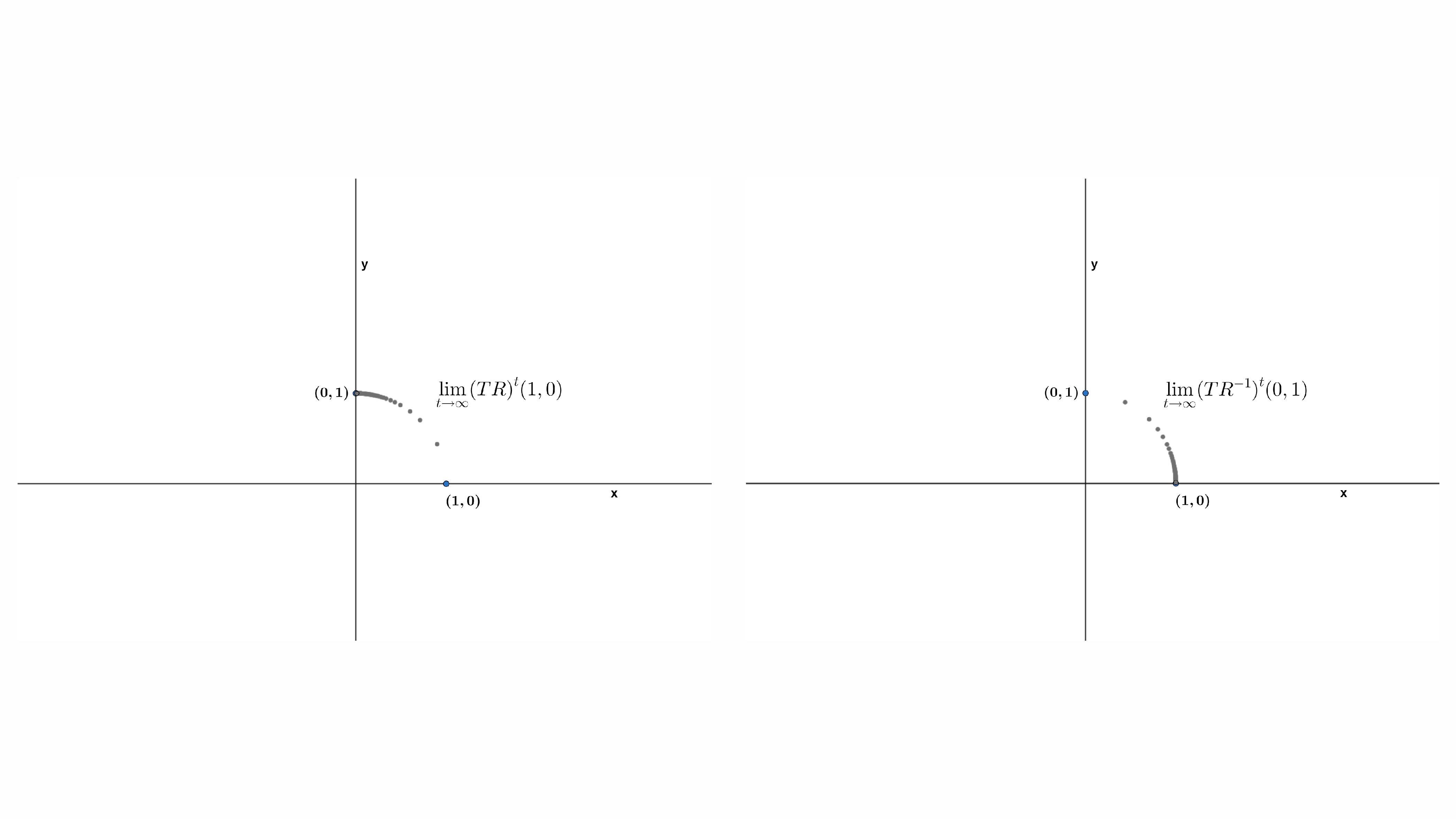}
\caption{The points $(TR)^t(v)$ (left) and $(TR^{-1})^t(u)$ (right) in the 2D projection.}
\label{fig:2D limit}
\end{figure}

By construction, the eigenvectors $u$ and $v$ project to the points $p = (0,1)$ and $q = (1,0)$. The sequences of points $(TR)^t(1,0)$ and $(TR^{-1})^t(0,1)$ (for $t \in \ZZ_{>0}$) are shown in Figure \ref{fig:2D limit}. These figures suggest that the first sequence approaches $(0,1)$ along the unit circle, and the second sequence approaches $(1,0)$ along the unit circle; this is verified in \S \ref{sec:circle}. The tangent line to the curve $(TR)^t(1,0)$ becomes horizontal as $t \rightarrow \infty$, and the tangent line to $(TR^{-1})^t(0,1)$ becomes vertical as $t \rightarrow \infty$. If the triangle formed by $(0,1), (1,0)$, and a third point $s$ determines a valid ping-pong table, then the triangle must contain the intersection of these two tangent lines, which is the point $(1,1)$. Thus, $s$ must lie in one of the closed cones $\ov{Q_1}$ or $\ov{Q_2}$ defined by
\[
\ov{Q_1}= {\mathbb{R}_{\geq 0}}\begin{pmatrix}0 \\ 1 \end{pmatrix} + {\mathbb{R}_{\geq 0}}\begin{pmatrix}1 \\ 1 \end{pmatrix} + \begin{pmatrix}1 \\ 1 \end{pmatrix}
\]
and
\[
\ov{Q_2}={\mathbb{R}}_{\geq 0}\begin{pmatrix}1 \\ 0 \end{pmatrix}+{\mathbb{R}}_{\geq 0}\begin{pmatrix}1 \\ 1 \end{pmatrix}+\begin{pmatrix}1 \\ 1
\end{pmatrix}.
\]
These cones are shown in Figure \ref{fig:Q1 Q2}.

Let
\[
s=\lambda_1\begin{pmatrix}0 \\ 1 \end{pmatrix}+\lambda_2\begin{pmatrix}1 \\ 1 \end{pmatrix} +\begin{pmatrix}1 \\ 1 \end{pmatrix}
= \begin{pmatrix}\lambda_2+1 \\ \lambda_1+\lambda_2+1\end{pmatrix}
\]
be a point in $\ov{Q_1} \setminus (1,1)$. This means that $\lambda_1, \lambda_2 \in {\mathbb{R}}_{\geq 0}$, and $\lambda_1,\lambda_2$ are not both zero. If $X',Y'$ are a valid ping-pong table, then every point of $Y'$ must be sent into $X'$ by $T$. By continuity, this implies that $T$ must send every point in $\ov{Y'}$ to $\ov{X'}$. The point $Rs$ is in $\ov{Y'}$, but we will show that $TRs$ is not in $\ov{X'}$.

\begin{figure}
\centering \includegraphics[width=10cm]{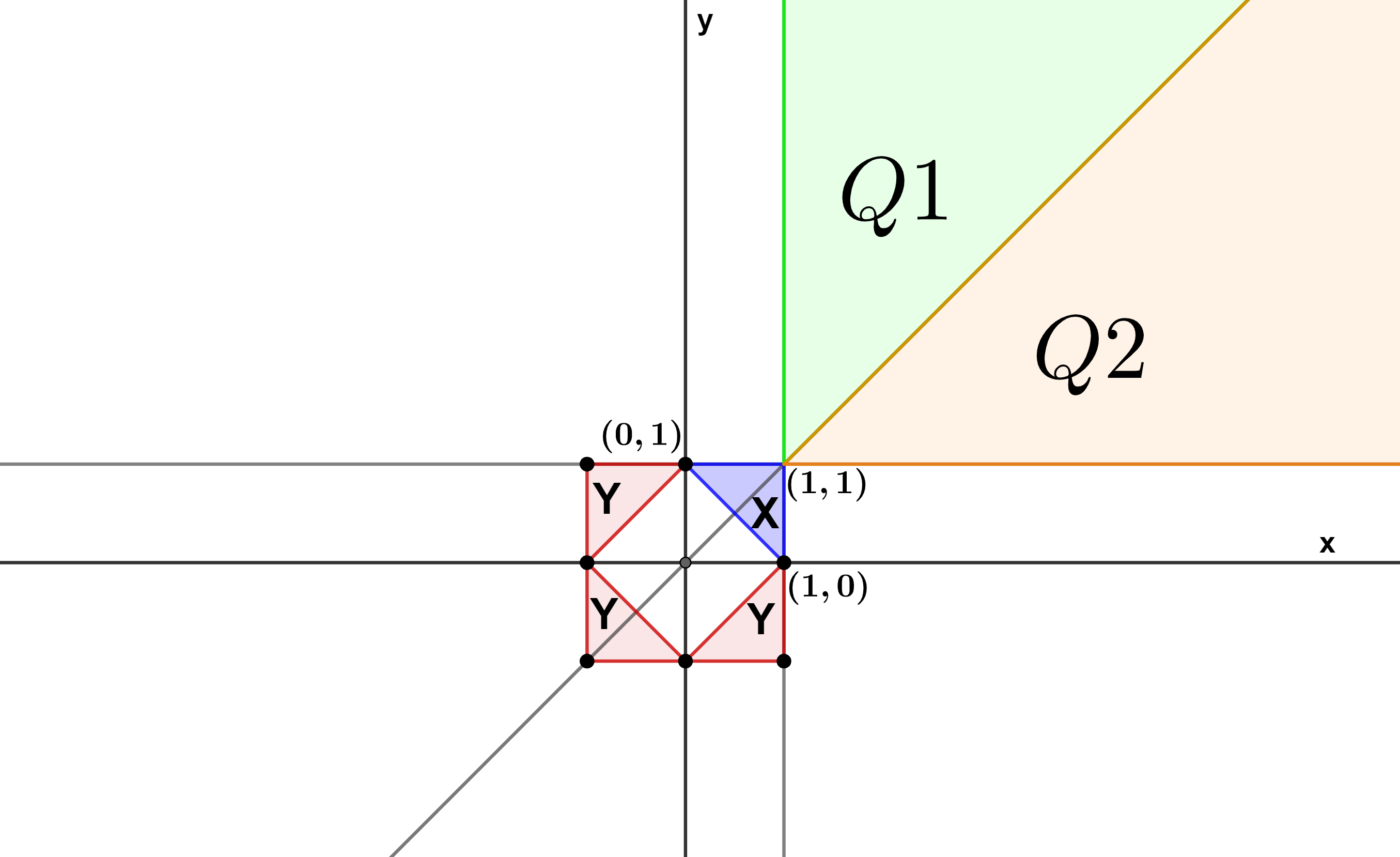}
\caption{The cones $Q_1$ and $Q_2$.}
\label{fig:Q1 Q2}
\end{figure}

The closed triangle $\ov{X'}$ is the intersection of the closed cones $\ov{X_1'}$ and $\ov{X_2'}$ defined by
\begin{align*}
\ov{X_1'} &= {\mathbb{R}}_{\geq 0}\left(s - \begin{pmatrix} 0 \\ 1 \end{pmatrix} \right) + {\mathbb{R}}_{\geq 0}\begin{pmatrix}1 \\ -1 \end{pmatrix}+\begin{pmatrix}0 \\ 1
\end{pmatrix}, \\
\ov{X_2'} &= {\mathbb{R}}_{\geq 0}\left(s - \begin{pmatrix} 1 \\ 0 \end{pmatrix} \right) +{\mathbb{R}}_{\geq 0}\begin{pmatrix}-1 \\ 1 \end{pmatrix} +\begin{pmatrix}1 \\ 0
\end{pmatrix}.
\end{align*}
These cones are illustrated in Figure \ref{fig:X_1', X_2'}. Suppose $TRs \in \ov{X_1'} \cap \ov{X_2'}$. This means there are $a,b,c,d \geq 0$ such that
\[
TRs = a\begin{pmatrix}\lambda_2 + 1 \\ \lambda_1 + \lambda_2
\end{pmatrix}+b\begin{pmatrix}1 \\ -1 \end{pmatrix}+\begin{pmatrix}0 \\ 1
\end{pmatrix} = c\begin{pmatrix}\lambda_2 \\ \lambda_1 + \lambda_2 + 1
\end{pmatrix}+d\begin{pmatrix}-1 \\ 1 \end{pmatrix}+\begin{pmatrix}1 \\ 0
\end{pmatrix}.
\]

\begin{figure}
\centering \includegraphics[width=10cm]{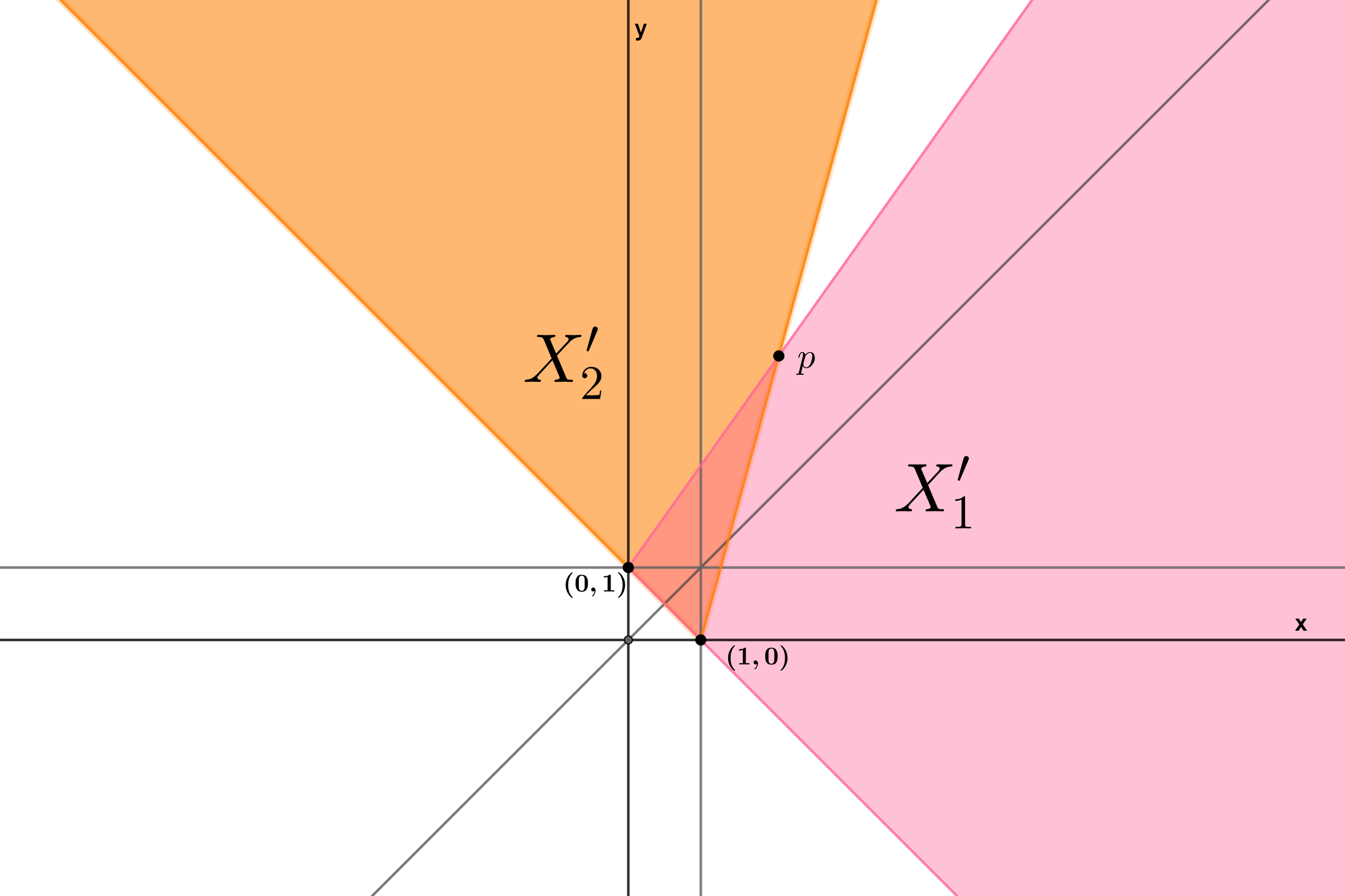}
\caption{The cones $X'_1$ and $X'_2$, whose intersection is $X'$.}
\label{fig:X_1', X_2'}
\end{figure}

Using \eqref{eq:R} and \eqref{eq:T}, we compute
\[
TRs=T\begin{pmatrix}\lambda_1+\lambda_2+1 \\-\lambda_2-1
\end{pmatrix}=
\begin{pmatrix}\dfrac{2\lambda_1+\lambda_2-1}{2\lambda_1-3} \medskip \\
\dfrac{\lambda_1-\lambda_2-3}{2\lambda_1-3}\end{pmatrix},
\]
so $a,b,c,d$ must be a solution to the system of linear equations
\[
\begin{matrix}
a(\lambda_2+1)+b=\dfrac{2\lambda_1+\lambda_2-1}{2\lambda_1-3} \quad & \quad
c\lambda_2-d+1=\dfrac{2\lambda_1+\lambda_2-1}{2\lambda_1-3} \medskip \\
a(\lambda_1+\lambda_2)-b+1=\dfrac{\lambda_1-\lambda_2-3}{2\lambda_1-3} \quad & \quad
c(\lambda_1+\lambda_2+1)+d=\dfrac{\lambda_1-\lambda_2-3}{2\lambda_1-3}.
\end{matrix}
\]
This system of equations has the unique solution
\[
a=\dfrac{\lambda_1-1}{\theta}, \quad b=\dfrac{2(\lambda_1^2+2\lambda_1\lambda_2+\lambda_2^2)}{\theta}, \quad c=\dfrac{\lambda_1-1}{\theta}, \quad d=\dfrac{-2(\lambda_2^2+\lambda_1+3\lambda_2+1)}{\theta},
\]
where
\[
\theta= (2\lambda_1 - 3)(\lambda_1 + 2\lambda_2 + 1).
\]
Since $\lambda_1,\lambda_2 \in \mathbb{R}_{\geq 0}$ and at least one of $\lambda_1$ and $\lambda_2$ is nonzero, $b$ and $d$ have opposite signs, contradicting that both are $\geq 0$. Thus, we cannot express $TRs$ both as a non-negative linear combination of generators of $\ov{X'_1}$, and as a non-negative linear combination of generators of $\ov{X'_2}$, so $TRs \not \in \ov{X'}$.

Now suppose $s \in \ov{Q_2} \setminus (1,1)$. Let $F \colon (a,b) \mapsto (b,a)$ be reflection over the line $x = y$. It is clear from \eqref{eq:R} and \eqref{eq:T} that $FRF = R^{-1}$ and $FTF = T$. Since $\ov{Q_2}$ is the reflection of $\ov{Q_1}$ over the line $x = y$, we conclude from the previous argument that $TR^{-1}s \not \in \ov{X'}$, so again $X'$ and $Y'$ are not a valid ping-pong table.

\subsection{A smaller ping-pong table}
\label{sec:circle}

We have shown that $C$ is the only simplicial cone that can be used to define a valid ping-pong table. If we drop the requirement that $C$ be a simplicial cone, however, then we have additional possibilities. As Figure \ref{fig:projection_uvw} illustrates, $T$ maps the triangles $RX, R^2X,$ and $R^3X$ to three smaller triangles inside $X$. We can therefore obtain a smaller ping-pong table by defining $X$ to be the union of these three triangles, and $Y$ to be the union of the images of these triangles under $R, R^2,$ and $R^3$. We will then be able to shrink $X$ and $Y$ even further. We now show that $X$ and $Y$ can be shrunk all the way down to the unit circle.

\begin{lem}
\label{lem:unit circle}
$R$ and $T$ map the unit circle to itself.
\end{lem}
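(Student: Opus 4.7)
The plan is to handle $R$ and $T$ separately using the explicit formulas \eqref{eq:R} and \eqref{eq:T}. For $R$, the formula $R(a,b)=(b,-a)$ expresses $R$ as a $90^\circ$ clockwise rotation about the origin, which is an isometry of $\RR^2$ and therefore manifestly preserves the unit circle. No computation is required here.

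For $T$, the statement is a purely algebraic identity. The goal is to show that if $a^2+b^2=1$, then
\[
\left(\frac{2a+b-2}{2a+2b-3}\right)^2+\left(\frac{a+2b-2}{2a+2b-3}\right)^2=1,
\]
equivalently $(2a+b-2)^2+(a+2b-2)^2=(2a+2b-3)^2$. I would simply expand both sides and show that their difference equals $a^2+b^2-1$; this is a short calculation (each side is a quadratic in $a,b$ with the cross terms, linear terms, and constants all matching) that reduces the claim to the hypothesis $a^2+b^2=1$.

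Before concluding, I would check that $T$ is actually defined at every point of the unit circle, i.e., that the denominator $2a+2b-3$ is nonzero whenever $a^2+b^2=1$. By Cauchy--Schwarz, $a+b\leq\sqrt{2(a^2+b^2)}=\sqrt{2}<\tfrac{3}{2}$, so $2a+2b-3<0$ on the unit circle, and $T$ is well-defined (and in fact continuous) there.

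The only potential obstacle is the algebraic identity for $T$, and it is not really an obstacle—just a brief expansion. No deeper ideas are needed for this lemma; the content is entirely in the coincidence that $(2a+b-2)^2+(a+2b-2)^2-(2a+2b-3)^2$ factors as $a^2+b^2-1$.
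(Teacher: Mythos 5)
Your proposal is correct and follows essentially the same route as the paper: $R$ is a rotation, and for $T$ one verifies the identity $(2a+b-2)^2+(a+2b-2)^2-(2a+2b-3)^2=a^2+b^2-1$ by direct expansion. The observation that $2a+2b-3<0$ on the unit circle (so $T$ is everywhere defined there) is a small but worthwhile addition that the paper leaves implicit.
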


\begin{proof} 
Let $(a,b)$ be a point on the unit circle. Clearly $b^2 + (-a)^2 = a^2+b^2 = 1$, so $R(a,b)$ is on the unit circle. For $T(a,b)$, we compute
\[
\left(\frac{2a+b-2}{2a+2b-3}\right)^2+\left(\frac{a+2b-2}{2a+2b-3}\right)^2
= \frac{5a^2+5b^2+8ab-12a-12b+8}{4a^2+4b^2+8ab-12a-12b+9}.
\]
Since $a^2 + b^2=1$, we can simplify this to
\[
\frac{5+8ab-12a-12b+8}{4+8ab-12a-12b+9} = 1,
\]
which shows that $T(a,b)$ is on the unit circle.
\end{proof}

It follows from Lemma \ref{lem:unit circle} and the discussion in \S \ref{sec:proj def} that the subsets
\[
X = \{(a,b) \mid a^2 + b^2 = 1, \; a,b > 0\}, \qquad Y = \{(a,b) \mid a^2 + b^2 = 1, \; a < 0 \text{ or } b < 0\}
\]
of the unit circle form a valid ping-pong table.

The projection $\rho$ is defined by
\[
\rho(x,y,z) = \left(\frac{-2(x-z)}{x-y+z}, \frac{-2y}{x-y+z}\right),
\]
so the unit circle consists of the projections of vectors $(x,y,z) \in \mathbb{R}^3$ satisfying the quadratic equation
\[
4(x-z)^2 + 4y^2 = (x-y+z)^2.
\]
Let $S$ be the surface in $\mathbb{R}^3$ defined by this equation. The maps $R$ and $T$ preserve this surface, so the intersection of $S$ with the ping-pong table in $\mathbb{R}^3$ defined in \S \ref{sec:ping-pong} is a valid ping-pong table.


\section{Comparison with the two-dimensional and four-dimensional cases}
\label{sec:2d 4d}

When $n=2$, we have
\[
R = \begin{pmatrix}
0 & -1 \\
1 & -1
\end{pmatrix}
\qquad
U = \begin{pmatrix}
0 & -1 \\
1 & 2
\end{pmatrix}
\qquad
T = \begin{pmatrix}
1 & 0 \\
-3 & 1 \\
\end{pmatrix}.
\]
As in the three-dimensional case, the matrices $U = TR$ and $RU^{-1}R^{-1} = T^{-1}R^{-1}$ have one as their only eigenvalue, and the corresponding eigenspace has dimension one. The corresponding eigenvectors are $u = (-1,1)$ and $v=(1,2)$, and one easily verifies that the open cone $C$ generated by $u$ and $v$ determines a ping-pong table by
\[
X = C \cup -C, \qquad Y = RX \cup R^2X.
\]
One can see that $C$ is (up to sign) the only simplicial cone with this property by an argument similar to the proof of Lemma \ref{lem:uv}. Note that $\ov{X} \cup \ov{Y}$ is equal to all of $\RR^2$ in this case.

When $n=4$, we have
\[
R = \begin{pmatrix}
0 & 0 & 0 & -1 \\
1 & 0 & 0 & -1 \\
0 & 1 & 0 & -1 \\
0 & 0 & 1 & -1
\end{pmatrix}
\qquad
U = \begin{pmatrix}
0 & 0 & 0 & -1 \\
1 & 0 & 0 & 4 \\
0 & 1 & 0 & -6 \\
0 & 0 & 1 & 4
\end{pmatrix}
\qquad
T = \begin{pmatrix}
1 & 0 & 0 & 0 \\
-5 & 1 & 0 & 0 \\
5 & 0 & 1 & 0 \\
-5 & 0 & 0 & 1
\end{pmatrix}.
\]
We describe a ping-pong table for $\langle R \rangle$ and $\langle T \rangle$, which is due to Brav and Thomas.

\begin{thm}[\cite{BravThomas}]
\label{thm:4D}
Let $P = \log(TR)$, and $Q = \log(T^{-1}R^{-1})$. Set $x = (0,7,-2,7)$, and define
\[
C^+ = \cone(x,Px,P^2x,P^3x), \qquad C^- = \cone(x,Qx,Q^2x,Q^3x).
\]
The sets
\[
X = \pm C^+ \cup \pm C^-, \qquad Y = RX \cup R^2X \cup R^3X \cup R^4X
\]
are a ping-pong table for $\langle R \rangle$ and $\langle T \rangle$.
\end{thm}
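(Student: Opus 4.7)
The plan is to verify the three hypotheses of the ping-pong lemma for $X$ and $Y$. Hypothesis (2), that each non-identity power of $R$ sends $X$ into $Y$, holds by construction, since $R^5 = I$ and $Y$ is defined as $\bigcup_{j=1}^{4} R^j X$. Hypothesis (1), the disjointness $X \cap Y = \emptyset$, reduces to a finite linear-algebraic computation: expand the generators $\pm P^i x$ of $\pm C^+$ and $\pm Q^i x$ of $\pm C^-$, write out linear inequalities describing each of the four subcones of $X$ and each of the sixteen subcones of $Y$, and verify pairwise incompatibility by a sign analysis similar to (though more elaborate than) the one used in the proof of Theorem \ref{thm:main}.

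The heart of the argument is hypothesis (3): $T^k Y \subseteq X$ for every $k \in \ZZ \setminus \{0\}$. Unlike the three-dimensional case, $T$ is unipotent rather than of order two, so $\langle T \rangle \cong \ZZ$ is infinite and we must control infinitely many conditions simultaneously. This is precisely the purpose of the cones $C^\pm$. The matrix $TR$ is unipotent with a single Jordan block (conjugate to $RT$ via $T$, with one-dimensional $1$-eigenspace), so $P = \log(TR)$ is nilpotent of index four: $P^4 = 0$ while $P^3 \neq 0$. Hence $\{x, Px, P^2 x, P^3 x\}$ is a basis of $\RR^4$, $C^+$ is a simplicial cone, and the one-parameter family
\[
(TR)^t \;=\; \exp(tP) \;=\; I + tP + \tfrac{t^2}{2!}P^2 + \tfrac{t^3}{3!}P^3
\]
has non-negative coefficients for all $t \geq 0$. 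Consequently $(TR)^t$ sends each generator $P^i x$ of $\ov{C^+}$ to a non-negative combination of the generators $P^i x, P^{i+1} x, \ldots, P^3 x$, so $(TR)^t \ov{C^+} \subseteq \ov{C^+}$ for every $t \geq 0$. By an entirely symmetric argument applied to $Q$, we also get $(T^{-1}R^{-1})^t \ov{C^-} \subseteq \ov{C^-}$ for $t \geq 0$.

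To finish, one must translate these cone-invariances into the required statement about $T^k R^j$ acting on each of the four subcones of $X$, for all $k \in \ZZ \setminus \{0\}$ and $j \in \{1,2,3,4\}$. The strategy is to decompose $T^k R^j$ as a product of factors drawn from $\{(TR)^{\pm 1}, (T^{-1}R^{-1})^{\pm 1}, R^{\pm 1}\}$, then propagate the generators of each subcone along the chain. The cone-invariances above control the positive powers of $TR$ and $T^{-1} R^{-1}$; wherever a wrong-sign factor or an isolated $R$ appears, one must verify by a direct polynomial-inequality argument in $k$ that the image of the subcone in question still lands in an appropriate piece of $X$. I expect the main obstacle to be the combinatorics of this case analysis, covering both signs of $k$ and all four rotation residues $j$, and the intricate bookkeeping that records which subcone of $X$ absorbs the image at each stage. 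This is precisely the detailed verification carried out by Brav and Thomas in \cite{BravThomas}, to which one may refer for the complete argument.
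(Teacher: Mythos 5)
First, note that the paper does not actually prove Theorem \ref{thm:4D}: it is quoted from \cite{BravThomas}, and the text only records what that proof establishes, so there is no in-paper argument to compare yours against line by line. Judged on its own terms, your sketch gets the motivation for the construction right: $TR$ is unipotent with a single Jordan block, so $P^4=0$ and $P^3\neq 0$, the vectors $x,Px,P^2x,P^3x$ form a basis, and $\exp(tP)=I+tP+\tfrac{t^2}{2}P^2+\tfrac{t^3}{6}P^3$ visibly maps $\ov{C^+}$ into itself for $t\ge 0$. That observation is correct and explains why $C^+$ is defined the way it is.

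However, the inclusion needed for hypothesis (3) is invariance under $T$ itself, not under $(TR)^t$, and your proposed route to it --- decomposing $T^kR^j$ into a word in $(TR)^{\pm1}$, $(T^{-1}R^{-1})^{\pm1}$, $R^{\pm1}$ and propagating cones along the word --- does not go through: any such decomposition of $T^k$ interleaves factors of $R^{\pm1}$, which do not preserve $C^{\pm}$, so the cone-invariance you established never applies beyond the last factor, and your fallback (a ``direct polynomial-inequality argument in $k$'') reinstates exactly the infinite family of conditions the cones were introduced to avoid. The mechanism that actually works, and which the paper records as the content of the Brav--Thomas proof, is a finite list of cone inclusions: $T(R^jX)\subseteq\pm C^+$ for $j=1,\dots,4$ together with $TC^+\subseteq C^+$, and the mirror statements $T^{-1}(R^jX)\subseteq\pm C^-$ and $T^{-1}C^-\subseteq C^-$. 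From these, $T^kY\subseteq\pm C^+\subseteq X$ and $T^{-k}Y\subseteq\pm C^-\subseteq X$ for all $k>0$ follow by a one-line induction on $k$, with no case analysis over $k$ or over rotation residues. Those finitely many inclusions, together with the disjointness $X\cap Y=\emptyset$, are the concrete computations that your proposal asserts but does not perform; as written, it is an outline that ultimately defers the decisive verifications to \cite{BravThomas} rather than a proof.
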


The proof in \cite{BravThomas} shows that
\[
T^kY \subseteq \pm C^+ \qquad \text{ and } \qquad T^{-k}Y \subseteq \pm C^-
\]
for $k > 0$. (In addition, the proof shows that $T C^+ \subseteq C^+$ and $T^{-1} C^- \subseteq C^-$.)

\begin{rem}
In \cite{BravThomas}, the matrices $R,T,U$ are represented in a different basis (and their $T$ plays the role of our $T^{-1}$). Our matrices are obtained from theirs by conjugating by the change of basis matrix
\[
S = \begin{pmatrix}
0 & 0 & 0 & 1 \\
-5 &5 & 1 & -3 \\
5 & -5 & -2 & 3 \\
0 & 5 & 1 & -1
\end{pmatrix}.
\]
The vector $x$ in Theorem \ref{thm:4D} is a positive scalar multiple of $Sv$, where
$
v = (0,1,-25/12,0)
$
is the vector defined on p.338 of their paper (in the case $d=k=5$).
\end{rem}

Explicitly, the vectors defining $C^+$ and $C^-$ are
\[
\begin{matrix}
& Px = (-5,9,-15,11) && P^3x = (-1,3,-3,1) \\
x = (0,7,-2,7) && P^2x = Q^2x = (0,1,-2,1) \\
& Qx = (5,16,-10,14) && Q^3x = (1,2,-2,4).
\end{matrix}
\]
We remark that $P^3x$ is the unique (up to scalar) eigenvector of $U = TR$, and $Q^3x$ is the unique eigenvector of $RU^{-1}R^{-1} = T^{-1}R^{-1}$. Furthermore, the matrices $P^2$ and $Q^2$ have rank 2, and their column spans intersect in the line spanned by $P^2x = Q^2x$. This vector is the analogue of $w$ in the three-dimensional case (cf. \S \ref{sec:logs}). In light of our results in the three-dimensional case, it seems natural to ask whether there is a vector $y$ such that the cone
\[
C = \cone(P^3x, Q^3x, P^2x, y)
\]
determines a ping-pong table by $X = \pm C$, $Y = RX \cup R^2X \cup R^3X \cup R^4X$. Our experiments in Sage suggest that there is no such $y$.

\bibliographystyle{alpha}
\bibliography{pingpong}

\end{document}